\def\frk{\frak}               % font for "Fraktur"
\def\mm{{\frk m}}
\def\Phi{{\frk n}}
\def\Phi{{\frk N}}
\def\opn#1#2{\def#1{\operatorname{#2}}} % to make operators
\opn\chara{char} \opn\length{\ell} \opn\pd{pd} \opn\rk{rk}
\opn\projdim{proj\,dim} \opn\injdim{inj\,dim} \opn\rank{rank}
\opn\depth{depth} \opn\sdepth{sdepth} \opn\fdepth{fdepth}
\opn\grade{grade} \opn\height{height} \opn\embdim{emb\,dim}
\opn\codim{codim}  \opn\min{min} \opn\max{max}
\opn\Tr{Tr} \opn\bigrank{big\,rank}
\opn\superheight{superheight}\opn\lcm{lcm}
\opn\trdeg{tr\,deg}%\emph{
\opn\reg{reg} \opn\lreg{lreg} \opn\ini{in} \opn\lpd{lpd}
\opn\size{size}
\opn\bigsize{bigsize}
\opn\div{div} \opn\Div{Div} \opn\cl{cl} \opn\Cl{Cl}
\opn\Spec{Spec} \opn\Supp{Supp} \opn\supp{supp} \opn\Sing{Sing}
\opn\Ass{Ass} \opn\Min{Min}
\opn\Ann{Ann} \opn\Rad{Rad} \opn\Soc{Soc}
\opn\Im{Im} \opn\Ker{Ker} \opn\Coker{Coker} \opn\Am{Am}
\opn\Hom{Hom} \opn\Tor{Tor} \opn\Ext{Ext} \opn\End{End}
\opn\Aut{Aut} \opn\id{id}  \opn\deg{deg}
\opn\nat{nat}
\opn\pff{pf}%   \pf exists already
\opn\Pf{Pf} \opn\GL{GL} \opn\SL{SL} \opn\mod{mod} \opn\ord{ord}
\opn\Gin{Gin} \opn\Hilb{Hilb}
\opn\aff{aff} \opn\con{conv} \opn\relint{relint} \opn\st{st}
\opn\lk{lk} \opn\cn{cn} \opn\core{core} \opn\vol{vol}
\opn\link{link} \opn\star{star}
\opn\gr{gr}
\def\pot#1#2{#1[\kern-0.28ex[#2]\kern-0.28ex]}
\opn\dirlim{\underrightarrow{\lim}}
\opn\inivlim{\underleftarrow{\lim}}
\let\Dirsum=\bigoplus
\let\to=\rightarrow
\def\Implies{\ifmmode\Longrightarrow \else
        \unskip${}\Longrightarrow{}$\ignorespaces\fi}
\def\implies{\ifmmode\Rightarrow \else
        \unskip${}\Rightarrow{}$\ignorespaces\fi}
\def\iff{\ifmmode\Longleftrightarrow \else
        \unskip${}\Longleftrightarrow{}$\ignorespaces\fi}
\newtheorem{Theorem}{Theorem}[]
\newtheorem{Lemma}[Theorem]{Lemma}
\theoremstyle{definition}
\newtheorem{Remark}[Theorem]{Remark}
\newtheorem{Example}[Theorem]{Example}
\newtheorem{Definition}[Theorem]{Definition}
\let\epsilon\varepsilon
\let\phi=\varphi
\let\kappa=\varkappa
\def\qed{\ifhmode\textqed\fi
      \ifmmode\ifinner\quad\qedsymbol\else\dispqed\fi\fi}
\def\textqed{\unskip\nobreak\penalty50
       \hskip2em\hbox{}\nobreak\hfil\qedsymbol
       \parfillskip=0pt \finalhyphendemerits=0}
\def\dispqed{\rlap{\qquad\qedsymbol}}
\opn\dis{dis}
\def\pnt{{\raise0.5mm\hbox{\large\bf.}}}
\opn\Lex{Lex}
\begin{document}

\title{\bf Size and Stanley depth of monomial ideals}
\author{  Dorin Popescu}
\thanks{ We gratefully acknowledge the support from the project  ID-PCE-2011-1023, granted by the Romanian National Authority for Scientific Research, CNCS - UEFISCDI}

\address{Dorin Popescu, Simion Stoilow Institute of Mathematics of the Romanian Academy, Research unit 5,
University of Bucharest, P.O.Box 1-764, Bucharest 014700, Romania}
\email{dorin.popescu@imar.ro}

\maketitle

\begin{abstract}  The Lyubeznik size of a monomial ideal $I$ of a polynomial ring $S$ is a lower bound for the Stanley depth of $I$ decreased by $1$. A proof given by Herzog-Popescu-Vladoiu had a gap which is solved here.

 \noindent
  {\it Key words }:  Stanley depth, Stanley decompositions, Size, lcm-lattices, Polarization.\\
 {\it 2010 Mathematics Subject Classification: Primary 13C15, Secondary 13F55, 13F20, 13P10.}
\end{abstract}

\section*{Introduction}
Let  $S=K[x_1,\ldots,x_n]$, $n\in {\bf N}$, be a polynomial ring over a field $K$ and $\mm=(x_1.\ldots,x_n)$.
 Let $I\supsetneq J$  be two   monomial ideals of $S$ and  $u\in I \setminus J$ a monomial.
  For $Z\subset \{x_1,\ldots ,x_n\}$  with $(J:u)\cap K[Z]=0$, let $uK[Z]$ be the linear $K$-subspace of $I/J$ generated by the elements $uf$, $f\in K[Z]$.  A  presentation of $I/J$ as a finite direct sum of such spaces ${\mathcal D}:\
I/J=\Dirsum_{i=1}^ru_iK[Z_i]$ is called a {\em Stanley decomposition} of $I/J$. Set $\sdepth
(\mathcal{D}):=\min\{|Z_i|:i=1,\ldots,r\}$ and
\[
\sdepth\ I/J :=\max\{\sdepth \ ({\mathcal D}):\; {\mathcal D}\; \text{is a
Stanley decomposition of}\;  I/J \}.
\]

Let $h$ be the height of $a=\sum_{P\in \Ass_SS/I} P$ and $r$ the minimum $t$ such that there exist $\{P_1,\ldots,P_t\}\subset \Ass_S S/I$ such that $\sum_{i=1}^t P_i=a$. We call the {\em size} of $I$ the integer $\size_SI=n-h+r-1$. Lyubeznik \cite{L} showed that $\depth_SI\geq 1+\size_SI$.
If
Stanley's Conjecture \cite{S} would hold, that is  $\sdepth_S I/J\geq \depth_S I/J$, then we would get also $\sdepth_SI\geq 1+\size_SI$ as it is stated in \cite{HPV}. Unfortunately,  there exists a counterexample in \cite{DGKM} of this conjecture for $I=S$, $J\not =0$ and it is possible  that there are also counterexamples for $J=0$. However, the counterexample of \cite{DGKM} induces another one for  $J\not =0$ and $I\not =S$ generated by $5$ monomials, which shows that our result from \cite{P1} is tight. This counterexample does not affect Question 1 from \cite{P2}.

Y.-H. Shen noticed that the second statement of \cite[Lemma 3.2]{HPV} is false when $I$ is not squarefree and so the proof from \cite{HPV} of  $\sdepth_SI\geq 1+\size_SI$ is correct only when $I$ is squarefree. Since the depth is not a lower bound of sdepth due to \cite{DGKM} the lower bound of sdepth  given by size will have a certain value.
The main purpose of this paper is to show the above inequality in general (see Theorem \ref{t}).

 The important tool in the crucial point of the proof is the application of \cite[Theorem 4.5]{IKM} (a kind of polarization) to the so called the lcm-lattice associated to $I$ (see \cite{GPW}). Unfortunately, the polarization does not behaves well with size (see e.g. \cite[Example 1.2]{F}). Since it behaves somehow better with the so-called bigsize (very different from that introduced in \cite{P}, see Definition \ref{d}),  we have to  replace the size with the bigsize. Our bigsize is the right notion for a monomial squarefree ideal $I\subset S$ (see Theorem \ref{b}, an illustration of its proof is given in Examples \ref{e4}, \ref{e5}). If $I$ is not squarefree and $I^p\subset S^p$ is its polarization then it seems that a better notion will be $\bigsize_{S^p}(I^p)-\dim S^p+\dim S$.

The  inequality $\sdepth_SS/I\geq \size_SI$ conjectured in \cite{HPV} was proved in \cite{T} when $I$ is squarefree and it is  extended in \cite{F}. Our bigsize is useless for this inequality (see Remark \ref{r}). A similar inequality is proved  by Y.-H. Shen in the frame of the quotients of squarefree monomial ideals \cite[Theorem 3.6]{Sh}.

We owe thanks to Y.-H. Shen and S. A. Seyed Fakhari who noticed several mistakes in some  previous versions of this paper, and to B. Ichim, A. Zarojanu for a bad example.

\section{Squarefree monomial ideals}

The proof of the  the following theorem  is given in \cite{HPV} in a more general form, which is  correct only for squarefree ideals. For the sake of  completeness   we recall it here in sketch.
\begin{Theorem} (Herzog-Popescu-Vladoiu) \label{hpv} If $I$ is a squarefree monomial ideal then $$\sdepth_SI\geq \size_S(I)+1.$$
\end{Theorem}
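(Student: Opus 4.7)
The plan is to induct on the number $n$ of variables. First, reduce to the case $\aa = \mm$: if some variable $x_n$ lies in no minimal prime of $I$, then no minimal generator of $I$ involves $x_n$, so $I = I'S$ with $I' = I \cap S'$ for $S' := K[x_1, \ldots, x_{n-1}]$; the identities $\sdepth_S I = \sdepth_{S'} I' + 1$ and $\size_S I = \size_{S'} I' + 1$ allow induction to close this case. So I assume $\aa = \mm$, whence $h = n$, $\size_S I = r - 1$, and the goal becomes $\sdepth_S I \geq r$. The case $r = 1$ forces $I = \mm$, and $\sdepth \mm \geq 1$ is immediate.

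For $r \geq 2$, pick minimal primes $P_1, \ldots, P_r$ of $I$ summing to $\mm$ with $r$ minimal; then $\sum_{i \geq 2} P_i \subsetneq \mm$, so I fix a variable $x_n \in P_1 \setminus (P_2 \cup \cdots \cup P_r)$. Set $I_1 := I \cap S'$ and $J := (I : x_n) \cap S'$. Since every minimal prime of $(I : x_n)$ avoids $x_n$, the ideal $(I : x_n) = JS$ is extended from $S'$, so $\sdepth_S (I : x_n) = \sdepth_{S'} J + 1$. The monomial decomposition $I = I_1 \oplus x_n \cdot (I : x_n)$ (as $K$-vector spaces) will yield a Stanley decomposition of $I$: each piece $uK[Z] \subseteq I_1$ of a Stanley decomposition of $I_1$ in $S'$ is enlarged to $uK[Z \cup \{x_n\}] \subseteq I$, raising its sdepth by $1$ since $u \in I$, and the remaining monomials of $x_n \cdot (I : x_n)$ are covered by $x_n$-shifts of a Stanley decomposition of $(I:x_n)$ in $S$ selected so that the enlarged pieces appear as a compatible subcollection.

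The combinatorial heart is the size bounds $\size_{S'} J \geq r - 2$ and $\size_{S'} I_1 \geq r - 2$. Partition the minimal primes of $I$ into $\mathcal{A} = \{P : x_n \in P\}$ and $\mathcal{B} = \{P : x_n \notin P\}$; the minimal primes of $J$ in $S'$ are exactly the members of $\mathcal{B}$. For $J$, let $V := \mm \setminus \aa_J$, so $x_n \in V$ and $|V| = n - h_J$. Every $x \in V$ lies in some prime of $\mathcal{A}$, because $V$ is disjoint from $\aa_J = \sum_{P\in\mathcal{B}} P$ while every variable belongs to at least one minimal prime of $I$. Appending at most $|V|$ primes of $\mathcal{A}$ to a family of $r_J$ primes of $\mathcal{B}$ summing to $\aa_J$ produces a cover of $\mm$, forcing $r_J + |V| \geq r$; hence
\[
\size_{S'} J \;=\; (n-1) - h_J + r_J - 1 \;\geq\; (n-1) - h_J + r - |V| - 1 \;=\; r - 2.
\]
An analogous count for $I_1$, allowing possibly one additional prime from $\mathcal{A}$ to pick up $x_n$ itself when none of the chosen primes realizing $\aa_1$ lies in $\mathcal{A}$, yields $\size_{S'} I_1 \geq r - 2$.

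Applying the inductive hypothesis in $S'$ gives $\sdepth_{S'} I_1 \geq r - 1$ and $\sdepth_{S'} J \geq r - 1$, so the enlarged $I_1$-pieces have sdepth $\geq r$ and the $x_n(I:x_n)$-pieces inherit sdepth $\geq \sdepth_{S'} J + 1 \geq r$, giving $\sdepth_S I \geq r = \size_S I + 1$. The main obstacle is the assembly step: one must realize the enlarged pieces as a sub-direct-summand of a Stanley decomposition of $(I:x_n) = JS$ achieving sdepth $\sdepth_{S'} J + 1$, so that the leftover pieces, prefixed by $x_n$, still enjoy this sdepth. This matching is available for squarefree $I$ because the inclusion $I_1 \subseteq J$ of squarefree monomial ideals can be lifted to a compatible interval partition in the HPV poset, but it is precisely what breaks in the non-squarefree setting and motivates the polarization and bigsize detour developed in the main body of the paper.
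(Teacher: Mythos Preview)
Your argument has a genuine gap at the assembly step, and the final paragraph does not close it.

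The vector-space splitting you set up is correct: as sets of monomials one has
\[
I \;=\; I_1S \;\sqcup\; x_n\bigl(JS\setminus I_1S\bigr),
\]
and enlarging a Stanley decomposition of $I_1$ by $x_n$ yields a decomposition of $I_1S$ with all pieces of size $\geq r$. The problem is covering the second block. Saying that the enlarged pieces ``appear as a compatible subcollection'' of a Stanley decomposition of $JS$ with sdepth $\geq r$ is exactly the assertion
\[
\sdepth_S(JS/I_1S)\;\geq\; r,\qquad\text{equivalently}\qquad \sdepth_{S'}(J/I_1)\;\geq\; r-1,
\]
and this is \emph{not} a consequence of the two bounds you establish, namely $\sdepth_{S'}I_1\geq r-1$ and $\sdepth_{S'}J\geq r-1$. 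There is no general inequality bounding $\sdepth(J/I_1)$ from below in terms of $\sdepth J$ and $\sdepth I_1$. Your appeal to ``a compatible interval partition in the HPV poset'' is a restatement of the needed inequality, not a proof of it; nothing about squarefreeness by itself guarantees that an interval partition of the poset of $I_1$ extends to one of $J$ with the same minimum interval length. (Your size bound for $J$ is fine; for $I_1$ you should also treat the degenerate case $I_1=0$, which occurs precisely when $(x_n)\in\Ass(S/I)$.)

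The paper avoids this difficulty altogether. It inducts on the number $s$ of minimal primes rather than on $n$, and instead of the single-variable split $I=I_1\oplus x_n(I:x_n)$ it invokes the ``splitting of variables'' decomposition of \cite[Theorem~1.6]{P} (after \cite[Lemma~2.1]{HPV}). That result gives
\[
\sdepth_S I \;\geq\; \min\Bigl\{A_0,\ \min_{\emptyset\neq\tau\subsetneq[s]} A_\tau\Bigr\},\qquad A_\tau=\sdepth_{S_\tau}J_\tau+\sdepth_{S''}L_\tau,
\]
where each $J_\tau$ and $L_\tau$ is an \emph{ideal} (never a quotient of ideals), so the inductive hypothesis applies directly to $L_\tau$ and the trivial bound $\sdepth_{S_\tau}J_\tau\geq 1$ suffices for $J_\tau$. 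A short counting argument then shows $\sdepth_{S''}L_\tau\geq\size_S I$, finishing the proof. Your one-variable split is conceptually simpler, but it forces you to control the Stanley depth of a quotient $J/I_1$, which is exactly the kind of object for which no size-type lower bound is available.
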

\begin{proof} Write
$I=\cap_{i\in [s]}P_i$ as  an irredundant  intersection of  monomial prime ideals of $ S$ and assume that $P_1=(x_1,\ldots,x_r)$ for some $r\in [n]$. Apply induction on $s$, the case $s=1$ being trivial. Assume that  $s>1$. Using \cite[Lemma 3.6]{HVZ} we may reduce to the case when $\sum_{i\in [s]} P_i=\mm$.

Set  $S'=K[x_1,\ldots,x_r]$, $S''=K[x_{r+1},\ldots,x_n]$. For every nonempty proper subset $\tau\subset [s]$ set
$$S_{\tau}=K[\{x_i: i\in [r], x_i\not \in \sum_{j\in \tau} P_j\}],$$
$$J_{\tau}=(\cap_{i\in [s]\setminus\tau}P_i)\cap S_{\tau},\ \
L_{\tau}=(\cap_{i\in \tau}P_i)\cap S''.$$
If $J_{\tau}\not =0$, $L_{\tau}\not =0$ define $A_{\tau}=\sdepth_{S_{\tau}} J_{\tau}+\sdepth_{S''} L_{\tau}$. Also define $A_0=\sdepth_S I_0$ for $I_0=(I\cap S')S$. By  \cite[Theorem 1.6]{P} (the ideas come from \cite[Proposition 2.3]{AP}) we have
$$\sdepth_SI\geq \min\{A_0, \{A_{\tau}: \emptyset \not =\tau \subset [s], J_{\tau}\not =0, L_{\tau}\not =0\}\}.$$

 Using again \cite[Lemma 3.6]{HVZ} we see that if $I_0\not =0$ then $\sdepth_S I_0\geq n-r\geq \size_S(I)+1$. Fix a nonempty proper subset  $\tau\subset [s]$ such that
 $J_{\tau}\not =0, \ L_{\tau}\not =0$.
 It is enough to show that $A_{\tau}\geq \size_S(I)+1,$
that is to verify that
$\sdepth_{S''}L_{\tau}\geq \size_S(I)$
because $\sdepth_{S_{\tau}}(J_{\tau})\geq 1$.

 Set $P_{\tau}=\sum_{i\in \tau} P_i\cap S''$,  let us say $P_{\tau}=(x_{r+1},\ldots,x_e)$ for some $e\leq n$. Let $j_1<\ldots <j_t$ in $\tau$ with $t$ minim such that $\sum_{i=1}^tP_{j_i}\cap S''=P_{\tau}$. Thus $\size_{S''}L_{\tau}=t-1+n-e$.
Choose $k_1<\ldots<k_u$ in $[s]\setminus(\tau \cup \{1\})$ with $u$ minim such that
$(x_{e+1},\ldots,x_n)\subset \sum_{i=1}^uP_{k_i}$.  We have $u\leq n-e$. Then $P_1+\sum_{i=1}^tP_{j_i}+\sum_{i=1}^uP_{k_i}=\mm$ and so $u+t+1\geq \size_S(I)+1$. By induction hypothesis on $s$ we have $\sdepth_{S''}L_{\tau}\geq \size_{S''}L_{\tau}+1=t+n-e\geq t+u\geq \size_S(I).$
\hfill\ \end{proof}

Now let $I\subset S$ be a monomial ideal not necessarily squarefree and   $I=\cap_{i\in [s]}Q_i$ an irredundant   decomposition  of $I$  as an intersection of irreducible monomial ideals, $P_i=\sqrt{Q_i}$. Set $a=\sum_{i=1}^s P_i$.  Let $\nu$ be a total order on $ [s]$. We say that $\nu$ is {\em admissible} if given $i,j,k\in [s]$ with $j,k>i$ with respect to $\nu$  and such that from $\height (\sum_{p\in [i]}P_p+P_k)>\height (\sum_{p\in [i]}P_p+P_j)$ it follows that $j< k$.
  Let  ${\mathcal F}=(Q_{i_k})_{k\in [t]}$ be a family of ideals from $(Q_j)_{j\in [s]}$,  $t\in [s]$, $i_1<\ldots<i_t$ with respect to $\nu$ such that $P_{i_k}$ are maximal among $(P_i)_i$, and
 set $a_{k,{\mathcal F}}=\sum_{j=1}^{k} P_{i_j}\subset a$, $a_{0,{\mathcal F}}=0$,
  $a_{\mathcal F}=a_{t,{\mathcal F}}$, $t_{\mathcal F}=t$, $h_{\mathcal F}=\height a_{\mathcal F}$. Shortly, we speak about a family $\mathcal F$ of $I$. If $I$ is squarefree then each $P_j$ is maximal among  $(P_i)$.

 \begin{Definition} \label{d0}   A family ${\mathcal F}$ of $I$ with respect to $\nu$ is {\em admissible} if  $P_{i_k}\not\subset a_{k-1,{\mathcal F}}$ for all $k\in [t]$.  The admissible family  ${\mathcal F}$ is {\em maximal} if $a_{\mathcal F}=a$, that is, there exist no prime ideal $P\in \Ass_SS/I$ which  is not contained in $a_{\mathcal F}$.
 \end{Definition}

 \begin{Definition}\label{d}  Let $\mathcal F$ be a family of $I$ with respect to $\nu$. If $t_{\mathcal F}=1$ we set $\bigsize({\mathcal F})=\dim S/P_{i_1}$. If $t_{\mathcal F}>1$ then define by recurrence the $\bigsize({\mathcal F})=\min\{\bigsize({\mathcal F}'),1+\bigsize({\mathcal F}_1)\}$, where ${\mathcal F}'=(Q_{i_k})_{1\leq k< t}$ and ${\mathcal F}_1$ is the family obtained from the family $\widetilde{{\mathcal F}_1}=(Q_{i_t}+Q_{i_k})_{1\leq k< t}$ removing those ideals $ Q_{i_t}+Q_{i_k}$ which contain another ideal  $ Q_{i_t}+Q_{i_{k'}}$ with $k'\in [t-1]\setminus \{k\}$.
  Note that ${\mathcal F}_1$ is given by $\Ass_SS/I_1$, where $I_1=\cap_{1\leq k< t}(Q_{i_t}+Q_{i_k})$, the decomposition being not necessarily irredundant. Then  ${\mathcal F}_1$ is a family of $I_1$ with respect to the order induced by $\nu$ such that roughly speaking $Q_{i_t}+Q_{i_k}$ is smaller than
  $Q_{i_t}+Q_{i_{k'}}$ if $k<k'$ with respect to $\nu$.
  The integer $\bigsize({\mathcal F})$ is called the {\em bigsize} of $\mathcal F$. Note that  $\bigsize({\mathcal F})\leq t-1+\dim S/a_{\mathcal F}$. Set $\bigsize_{\nu}(I)=\bigsize({\mathcal F})$ for a maximal admissible family $\mathcal F$ of $I$ with respect to $\nu$. We call the {\em bigsize} of $I$
 the maximum $\bigsize_S(I) $ of $\bigsize_{\nu}(I)$ for all total admissible orders $\nu$ on $[s]$.
 \end{Definition}
 \begin{Remark} \label{r'} Note that given a total admissible  order $\nu$ there exists just one maximal admissible family  $\mathcal F$  with respect to $\nu$ so the above definition has sense.
 \end{Remark}
\begin{Example}\label{ex} Let $n=6$, $P_1=(x_1,x_2,x_4)$, $P_2=(x_1,x_3,x_4,x_6)$, $P_3=(x_2,x_3,x_4,x_6)$, $P_4=(x_1,x_4,x_5,x_6)$, $P_5=(x_1,x_2,x_3,x_5,x_6)$ and set $I=\cap_{i\in [5]}P_i$. Then ${\mathcal F}=\{P_1,P_2,P_5\}$,  ${\mathcal G}=\{P_1,P_3,P_4\}$ are maximal admissible  families of $I$ with respect of some total admissible order of $[5]$,  but $\bigsize({\mathcal F}')=\min\{3,1+1\}=2= \bigsize({\mathcal G}')$ and  $\bigsize({\mathcal F}_1)=0$, $\bigsize({\mathcal G}_1)=1$ which implies $\bigsize({\mathcal F})=1<2=\bigsize({\mathcal G})$. Note that $a_{k,{\mathcal F}}=a_{k,{\mathcal G}}$ for each $k\in [3]$.
\end{Example}

 \begin{Remark} \label{r''}
 Assume that $a_{\mathcal F}=(x_1,\ldots,x_r)$ for some $r\in [n]$. Set ${\tilde S}=K[x_1,\ldots,x_r]$ and let ${\tilde{\mathcal F}}=(Q_{i_k}\cap {\tilde S})_{k\in [t]}$. Then $\bigsize({\mathcal F})=n-r+\bigsize({\tilde {\mathcal F}})$.
 \end{Remark}
 \begin{Remark} \label{r0}
 Let ${\mathcal F}=(Q_{i_k})_{k\in [t]}$ be a an admissible family of $I$ with respect to a total admissible order $\nu$ and $r\in [t-1]$. Then ${\mathcal G}=(Q_{i_k})_{k\in [r]}$ is an admissible family of $I$ with respect to  $\nu$ and $\bigsize({\mathcal F})\leq \bigsize({\mathcal G})$.
 \end{Remark}
 \begin{Remark} \label{r4}
 Let ${\mathcal F}=(Q_{i_k})_{k\in [t]}$ be a a  family of $I$ with respect to a total admissible order $\nu$.   Then  $\bigsize({\mathcal F})=r-1+\dim S/(P_{i_{k_1}}+\ldots +P_{i_{k_r}})$ for some $k_1<\ldots <k_r$ from $[t]$.
 \end{Remark}

 \begin{Example} \label{e} Let $n=5$, $P_1=(x_1,x_2)$, $P_2=(x_2,x_3)$, $P_3=(x_1,x_4,x_5)$ and $I=P_1\cap P_2\cap P_3$. Then  ${\mathcal F}=(P_i)_{i\in [3]}$ is a maximal admissible family of $I$ with respect to the usual order $\nu$ and  $\size_SI=1$ because $P_2+P_3=\mm$. Note that ${\mathcal F}'=(P_i)_{i=1,2}$ has $\bigsize({\mathcal F}')=\min\{3,1+2\}=3$ and ${\mathcal F}_1=(P_3+P_i)_{i=1,2}$ has $\bigsize({\mathcal F}_1)=1$. Thus $\bigsize({\mathcal F})=2$.

  The order given by  $I=P_2\cap P_3\cap P_1$ is not admissible, but the order $\nu'$ given by $I=P_2\cap P_1\cap P_3$ is admissible. The family ${\mathcal G}=(P_i)_{i=2,1,3}$ has $\bigsize({\mathcal G}')=\min\{3,1+2\}=3$ and ${\mathcal G}_1=(P_3+P_i)_{i=2,1}$ has $\bigsize({\mathcal G}_1)=1$. Thus $\bigsize({\mathcal G})=\min\{3,1+1\}=2$. Similarly, the order $\nu''$ given by  $\{3,1,2\}$ is total admissible, the family ${\mathcal H}=(P_i)_{i=3,1,2}$ has $\bigsize({\mathcal H}')=\min\{2,1+1\}=2$ and ${\mathcal H}_1=(P_2+P_i)_{i=3,1}$ has $\bigsize({\mathcal H}_1)=1$. Thus $\bigsize({\mathcal H})=\min\{2,1+1\}=2$ and
 we have $\bigsize_{\nu'',S}(I)=2$.
\end{Example}

 \begin{Example} \label{e1} Let $n=2$, $Q_1=(x_1)$, $Q_2= (x_1^2,x_2)$  and $I=Q_1\cap Q_2$. Then $P_2$ is the only prime $P_i$ maximal among $(P_j)_{j\in [2]}$ and for ${\mathcal F}=\{P_2\}$ we have  $\bigsize_S({\mathcal F})=\size_S(I)=0$.
 \end{Example}
 \begin{Example} \label{e0} Let $n=4$, $Q_1=(x_1,x_2^2)$, $Q_2= (x_2,x_3)$, $Q_3=(x_3^2,x_4)$  and $I=Q_1\cap Q_2\cap Q_3$. Then  ${\mathcal F}=(Q_i)_{i\in [3]}$ is a maximal admissible family of $I$ with respect to the usual order $\nu$ and  $\size_SI=1$ because $P_1+P_3=\mm$. Note that ${\mathcal F}'=(Q_i)_{i=1,2}$ has $\bigsize({\mathcal F}')=\min\{2,1+1\}=2$ and ${\mathcal F}_1=(Q_3+Q_i)_{i=1,2}$ has $\bigsize({\mathcal F}_1)=0$. Thus $\bigsize({\mathcal F})=\min\{2,1+0\}=1$.

 The order  $\nu'$ given by $I=P_2\cap P_1\cap P_3$ is admissible. The family ${\mathcal G}=(Q_i)_{i=2,1,3}$ has $\bigsize({\mathcal G}')=2$ and ${\mathcal G}_1=(Q_3+Q_i)_{i=2,1}$ has $\bigsize({\mathcal G}_1)=0$. Thus $\bigsize({\mathcal G})=\min\{2,1+0\}=1$ and
 we have $\bigsize_{\nu',S}(I)=1$. Similarly, the order  $\nu''$ given by $\{2,3,1\}$ is total admissible and $\bigsize_{\nu''}(I)=1$. Also note that the order ${\bar \nu}$ given by $\{3,2,1\}$ is total admissible, the family ${\mathcal H}=(Q_i)_{i=3,2,1}$ has $\bigsize({\mathcal H}')=\min\{2,1+1\}=2$ and ${\mathcal H}_1=(Q_1+Q_i)_{i=3,2}$ has $\bigsize({\mathcal H}_1)=0$. Thus $\bigsize({\mathcal H})=\min\{2,1+0\}=1$ and
 we have $\bigsize_{{\bar\nu},S}(I)=1$.
 \end{Example}

\begin{Example} \label{e'} Let $n=6$, $P_1=(x_1,x_2)$, $P_2= (x_1,x_3)$, $P_3=(x_1,x_6)$, $P_4=(x_3,x_4)$, $P_5=(x_3,x_5)$, $P_6=(x_2,x_4)$, $P_7=(x_5,x_6)$ and $I=\cap_{i\in [7]} P_i$. Let  $\nu$ be the usual order and ${\mathcal F}=(P_i)_{i\in [5]}$. Then $\mathcal F$ is maximal admissible and  $\bigsize({\mathcal F})=4>\size_SI$.  Taking $\nu'$ given by the order $\{7,5,3,1,4\}$ we get a maximal admissible family $\mathcal G$ with $\bigsize({\mathcal G})=3$. Thus $\bigsize_S(I)=4>3=\size_SI$.
 \end{Example}

\begin{Lemma}\label{l0}  Let  $\nu$ be a total admissible order on $[s]$ and ${\mathcal F}=(Q_{i_k})_{k\in [t]} $ a family of $I$ with respect to $\nu$. Then
$ \bigsize(\mathcal F)\geq \size_SI.$
\end{Lemma}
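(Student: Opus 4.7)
The plan is to reduce $\bigsize(\mathcal{F}) \geq \size_S I$ to a single height inequality via Remark \ref{r4}. That remark furnishes indices $k_1 < \cdots < k_r$ in $[t]$ such that $\bigsize(\mathcal{F}) = (r-1) + \dim S/P$, where $P := P_{i_{k_1}} + \cdots + P_{i_{k_r}}$. Writing $h' := \height P$, $h := \height a$, and letting $r^*$ be the minimum number of associated primes of $S/I$ whose sum is $a$, we have $\bigsize(\mathcal{F}) = r - 1 + n - h'$ while $\size_S I = r^* - 1 + n - h$. Thus the desired inequality is equivalent to
\[
h - h' \;\geq\; r^* - r.
\]

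To produce this bound, I would enlarge $\{P_{i_{k_1}}, \ldots, P_{i_{k_r}}\}$ to a set of associated primes summing to $a$ by adjoining the minimum possible number $m$ of further primes $P'_1, \ldots, P'_m$. First, $\{P_{i_{k_j}}\}_{j=1}^{r} \cup \{P'_j\}_{j=1}^{m}$ is then a set of $r + m$ associated primes with sum $a$ (distinctness follows from minimality, since any $P'_j$ contained in $P$ could be discarded), yielding $r + m \geq r^*$. Second, minimality of $m$ forces each $P'_j$ to involve at least one variable not in $P + \sum_{k \neq j} P'_k$; indeed, otherwise $P'_j$ would be contained in that sum (all ideals under consideration being monomial primes, for which containment coincides with inclusion of variable sets), contradicting irredundancy. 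The privileged variables so chosen are pairwise distinct across $j$, so the variable support of $a$ exceeds that of $P$ by at least $m$, i.e., $h - h' \geq m$. Combining, $h - h' \geq m \geq r^* - r$, as required.

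The only delicate point is this last height estimate, which crucially uses that $I$ is a monomial ideal: its associated primes are monomial primes, so non-containment between them is detected by variable support, converting the combinatorial fact of irredundancy into a genuine height increment of at least one per adjoined prime. Beyond this, neither the admissibility of $\nu$ nor the recursive definition of $\bigsize$ is invoked in the argument—both are already absorbed into Remark \ref{r4}—so no induction on the structure of $\mathcal{F}$ is needed.
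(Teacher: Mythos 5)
Your proof is correct and takes essentially the same approach as the paper's: both reduce via Remark \ref{r4} to the $r$ primes realizing $\bigsize(\mathcal F)$ and then complete them to a collection of associated primes summing to $a$, trading one added prime against one new variable. The only (harmless) difference is that the paper adjoins one prime per missing variable under the implicit assumption $a=\mm$, whereas you take a minimal completion and extract a fresh variable from each adjoined prime, which also covers the case $a\neq\mm$.
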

\begin{proof}
  By Remark \ref{r4}  we have  $\bigsize({\mathcal F})=r-1+\dim S/(P_{i_{k_1}}+\ldots +P_{i_{k_r}})$ for some $k_1<\ldots <k_r$ from $[t]$.
  We may suppose that $\sum_{j\in [r]}P_{i_{k_j}}=(x_1,\ldots,x_e)$ for some $e\in [n]$. Choose for each $p>e$, $p\leq n$ an $u_p\in [s]$ such that $x_p\in P_{u_p}$. Then $\sum_{j\in [r]}P_{i_{k_j}}+\sum_{p=e+1}^n P_{u_p}=\mm$ and so $\size I\leq  r-1+\dim S/(P_{i_{k_1}}+\ldots +P_{i_{k_r}})=\bigsize({\mathcal F})$.
\hfill\ \end{proof}

Next we present a slightly extension of Theorem \ref{hpv}.

\begin{Theorem}\label{b}  Let  $I=\cap_{i\in [s]}P_i$  be an irredundant  intersection of  monomial prime ideals of $ S$.  Then $\sdepth_SI\geq 1+ \bigsize_S(I)$.
\end{Theorem}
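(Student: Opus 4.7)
I would mimic the induction in the proof of Theorem \ref{hpv}, proceeding by induction on $s$. The base case $s=1$ is direct: for a monomial prime $P_1$ of height $h$, the variable-extension formula $\sdepth_S P_1 = (n-h)+\sdepth_{K[x_1,\dots,x_h]}(x_1,\dots,x_h)$ together with $\sdepth(x_1,\dots,x_h)\geq 1$ yields $\sdepth_S P_1 \geq 1+(n-h) = 1+\bigsize_S(P_1)$.

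For the inductive step $s>1$, pick an admissible total order $\nu$ on $[s]$ realizing $\bigsize_S(I)=\bigsize({\mathcal F})$ for the unique (Remark \ref{r'}) maximal admissible family ${\mathcal F}=(P_{i_k})_{k\in[t]}$. By Remark \ref{r''} I may reduce to the case $a_{\mathcal F}=\mm$. As the distinguished prime in the HPV splitting I take $P_1:=P_{i_1}$; by admissibility of $\nu$ applied with $i=0$, the prime $P_{i_1}$ has minimum height $r$ among the $P_i$. Relabel variables so that $P_1=(x_1,\dots,x_r)$, and set $S'=K[x_1,\dots,x_r]$, $S''=K[x_{r+1},\dots,x_n]$. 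Then \cite[Theorem 1.6]{P} yields
$$\sdepth_S I \;\geq\; \min\bigl\{A_0,\;A_\tau:\emptyset\neq\tau\subsetneq[s],\,J_\tau\neq 0,\,L_\tau\neq 0\bigr\},$$
where $I_0$, $J_\tau$, $L_\tau$, $A_0$, $A_\tau$ are as in the proof of Theorem \ref{hpv}.

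For the $A_0$ bound, the variable-extension formula produces $\sdepth_S I_0 \geq (n-r)+1$, so it suffices to prove $n-r\geq \bigsize_S(I)$; this follows by combining Remark \ref{r0} with the base case, since truncating ${\mathcal F}$ to the singleton $(P_{i_1})$ yields $\bigsize_S(I) = \bigsize({\mathcal F})\leq \bigsize((P_{i_1}))=\dim S/P_{i_1}=n-r$.

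For each $A_\tau$, by induction applied to $L_\tau=(\cap_{i\in\tau}P_i)\cap S''$ (a squarefree intersection of strictly fewer primes in $S''$), $\sdepth_{S''}L_\tau \geq 1+\bigsize_{S''}(L_\tau)$, and since $\sdepth_{S_\tau}J_\tau\geq 1$ it is enough to show $\bigsize_{S''}(L_\tau)\geq \bigsize_S(I)-1$. This is the central obstacle: one must transfer the admissible family ${\mathcal F}$ to an admissible family on the primes of $L_\tau$ in $S''$. The plan is to order the nonzero primes $P_{i_k}\cap S''$ for $i_k\in\tau$ by the restriction of $\nu$, verify that the height inequalities underlying admissibility survive restriction to $S''$ (so that the restricted order is itself admissible in $S''$), and check that the recursive bigsize recipe loses at most one unit when $P_1=P_{i_1}$ is removed from ${\mathcal F}$ and the remaining primes are restricted to $S''$. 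Once this transfer is in place, $A_\tau \geq 1+1+(\bigsize_S(I)-1) = 1+\bigsize_S(I)$, and combining the two bounds closes the induction.
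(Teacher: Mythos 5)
Your plan follows the right template (HPV splitting plus induction), but it breaks at exactly the point you yourself flag as ``the central obstacle,'' and not merely because the transfer argument is left as a plan: with your choice of splitting the inequality you need is actually \emph{false}. You split on $P_{i_1}$, the minimal-height first prime of the admissible order, and then need $\bigsize_{S''}(L_\tau)\geq \bigsize_S(I)-1$ for every admissible $\tau$. Test this on Example \ref{e'}: there $\bigsize_S(I)=4$ is realized by ${\mathcal F}=(P_1,\dots,P_5)$ with the usual order, so $P_{i_1}=P_1=(x_1,x_2)$ and $S''=K[x_3,x_4,x_5,x_6]$. Any $\tau$ with $J_\tau\neq 0$ and $L_\tau\neq 0$ must contain $\{4,5,7\}$, and every prime of $L_\tau$ in $S''$ has height at least $2$; since the bigsize of any family is bounded by the bigsize of its initial singleton (Remark \ref{r0}), $\bigsize_{S''}(L_\tau)\leq \dim S''-2=2<3=\bigsize_S(I)-1$. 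So no amount of care in transferring the admissible order to $S''$ can close your inductive step: the defect is in the choice of the distinguished prime, not in the bookkeeping.

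The paper's proof makes a structurally different choice. It takes $S'$ to be spanned by the variables \emph{outside} $\sum_{k\in[t-1]}P_{i_k}$ (the span of the first $t-1$ members of the maximal admissible family), invoking \cite[Lemma 2.1]{HPV} so that \cite[Theorem 1.6]{P} applies even though $(x_1,\dots,x_r)$ need not be an associated prime. With that splitting, every $i_k$ with $k\in[t-1]$ is forced into $\tau$, the restrictions ${\mathcal H}=(P_{i_k}\cap S'')_{k\in[t-1]}$ form a maximal admissible family of $L_\tau$ with $\bigsize({\mathcal H})\geq \bigsize({\mathcal F}_1)$, and the recursive definition of bigsize supplies exactly the needed drop $\bigsize({\mathcal F})\leq 1+\bigsize({\mathcal F}_1)$ --- i.e., the ``loss of one unit'' is attached to peeling off the \emph{last} element $P_{i_t}$ of the family, as in Definition \ref{d}, not the first. (The induction runs on $n$, which this splitting strictly decreases; induction on $s$ is delicate here because $L_\tau$ need not be an irredundant intersection of fewer primes in any controlled way.) In Example \ref{e'} this means splitting on $S'=K[x_5]$ rather than on $K[x_1,x_2]$, which is precisely what Example \ref{e4} illustrates. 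To repair your argument you would need to replace your $P_{i_1}$-splitting by this one and replace your hoped-for inequality by the comparison with $\bigsize({\mathcal F}_1)$.
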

\begin{proof}  Using \cite[Lemma 3.6]{HVZ} we may reduce to the case when $\sum_{i\in [s]} P_i=\mm$. Apply induction on $n$.  Assume  that  $\bigsize_S(I) =\bigsize(\mathcal F)$ for a maximal admissible family ${\mathcal F}=(P_{i_k})_{k\in [t]}$ of $I$ with respect to a total admissible order $\nu$.  We may suppose that $i_t=s$ and  $\sum_{k\in [t-1]}P_{i_k}=(x_{r+1},\ldots,x_n)$, $r\geq 1$. Set $S'=K[x_1,\ldots,x_r]$, $S''=K[x_{r+1},\ldots,x_n]$.  We may use  \cite[Theorem 1.6]{P} even when $(x_1,\ldots,x_r)\not \in \Ass_SS/I$ (see \cite[Lemma 2.1]{HPV}).
In the notations of Theorem \ref{hpv} we have
$$\sdepth_SI\geq \min\{A_0, \{A_{\tau}: \emptyset \not =\tau \subset [s], J_{\tau}\not =0, L_{\tau}\not =0\}\}.$$
If $I_0=(I\cap S')S\not =0$ then $A_0\geq  1+(n-r)\geq 1+\dim S/P_{i_t}\geq 2+\bigsize({\mathcal F}_1)\geq  1+\bigsize_S(I)  $.

 Now suppose that $\sdepth_SI\geq A_{\tau}$ for some $\tau\subset [s]$ with $J_{\tau}\not =0$, $ L_{\tau}\not =0$. Thus $i_k$ must be in $\tau$ for any $k\in [t-1]$ because otherwise $J_{\tau}=0$. Then  ${\mathcal H}=(P_{i_k}\cap S'')_{k\in [t-1]}$ is a maximal admissible family of $L_{\tau}$ with respect to $\nu$. Note that $\bigsize(\mathcal H)\geq \bigsize({\mathcal F}_1)$.  By induction hypothesis on $n$ we have $$\sdepth_{S''}L_{\tau}\geq 1+\bigsize_{S''}(L_{\tau})\geq 1+\bigsize(\mathcal H)\geq 1+\bigsize({\mathcal F}_1)\geq \bigsize({\mathcal F}).$$
Therefore,
$$\sdepth_SI\geq A_{\tau}\geq 1+\sdepth_{S''}L_{\tau}\geq 1+\bigsize({\mathcal F})= 1+ \bigsize_S(I).$$
\hfill\ \end{proof}
\begin{Example}\label{e4}  We illustrate the above proof on the case of $\mathcal F$ given in Example \ref{e'}. Set $S'=K[x_5]$, $S''=K[x_1,x_2,x_3,x_4,x_6]$. Then $\tau=\{1,2,3,4,6\}$ is the only $\tau\subset [7]$ such that $J_{\tau}\not = 0$. We have $\sdepth_SI=5=1+\sdepth_{S''}L_{\tau}$.
Set ${\tilde S}'=K[x_4]$, ${\tilde S}''=K[x_1,x_2,x_3,x_6]$. Then ${\tilde \tau}=\{1,2,3\}$ is the only ${\tilde \tau}\subset \tau=[7]\setminus \{5,7\}$ such that $J_{\tilde \tau}\not =0$.
We have $\sdepth_{ S''}L_{\tau}=4=1+\sdepth_{{\tilde S}''}L_{\tilde \tau}$.
Now set ${\hat S}'=K[x_6]$, ${\hat S}''=K[x_1,x_2,x_3]$. Then ${\hat \tau}=\{1,2\}$ is the only ${\hat \tau}\subset {\tilde \tau}=[7]\setminus \{4,5,6,7\}$ such that $J_{\hat \tau}\not =0$. We have $\sdepth_{{\tilde S}''}L_{\tilde \tau}=3=1+\sdepth_{{\hat S}''}L_{\hat \tau}$.

On the other hand, ${\mathcal H}=\{P_1\cap S'', P_2\cap S'',P_3\cap S'',P_4\cap S''\}$ is a maximal admissible family of $L_{\tau}$ and we have $\bigsize({\mathcal H})=3=\bigsize({\mathcal F}_1)$. Also note that ${\mathcal P}=\{P_1\cap {\tilde S}'',P_2\cap {\tilde S}'',P_3\cap {\tilde S}''\}$ is a maximal admissible family of $L_{\tilde \tau}$ and $\bigsize({\mathcal P})=2=\bigsize({\mathcal H}_1)$. Finally, ${\mathcal E}=\{P_1\cap {\hat S}'',P_2\cap {\hat S}''\}$ is a maximal admissible family of $L_{\hat \tau}$ and $\bigsize({\mathcal E})=1=\bigsize({\mathcal P}_1)$.
Therefore, we have  $\sdepth_SI=1+\bigsize({\mathcal F})$,
 $\sdepth_{S''}L_{\tau}=1+\bigsize({\mathcal H})$,   $\sdepth_{{\tilde S}''}L_{\tilde \tau}=1+\bigsize({\mathcal P})$ and  $\sdepth_{{\hat S}''}L_{\hat \tau}=1+\bigsize({\mathcal E})$.
\end{Example}
\begin{Remark}\label{r} Note that in Example \ref{e4} we have $\sdepth_SS/I=3=\bigsize({\mathcal G})<4=\bigsize({\mathcal F})=\bigsize_S(I)$ which shows that the corresponding inequality for $S/I $ fails using this bigsize.
 As $\sdepth_{S''}S''/L_{\tau}=3$ too, we see that the proof of Theorem \ref{b} fails in the case of the module $S/I$. Thus the so-called the splitting of variables for arbitrary $r$ from \cite[Proposition 2.1]{HPV} does not hold for $S/I$ (this holds for the case when $r$ is given by a so-called main prime as it is used in \cite{T}).
\end{Remark}

\begin{Example}\label{e5}  We consider now the case of $\mathcal G$ given in Example \ref{e'}. Set $S'=K[x_4]$, $S''=K[x_1,x_2,x_3,x_5,x_6]$. Then $\tau=\{1,2,3,5,7\}$ is the only $\tau\subset [7]$ such that $J_{\tau}\not = 0$. We have $\sdepth_SI=5=1+\sdepth_{S''}L_{\tau}$.
Set ${\tilde S}'=K[x_2]$, ${\tilde S}''=K[x_1,x_3,x_5,x_6]$. Then ${\tilde \tau}=\{3,5,7\}$ is the only ${\tilde \tau}\subset \tau= [7]\setminus \{4,6\}$ such that $J_{\tilde \tau}\not =0$.
We have $\sdepth_{ S''}L_{\tau}=4=1+\sdepth_{{\tilde S}''}L_{\tilde \tau}$.
Now set ${\hat S}'=K[x_1]$, ${\hat S}''=K[x_3,x_5,x_6]$. Then ${\hat \tau}=\{5,7\}$ is the only ${\hat \tau}\subset{\tilde \tau}= [7]\setminus \{2,4,6\}$ such that $J_{\hat \tau}\not =0$. We have $\sdepth_{{\tilde S}''}L_{\tilde \tau}=3=1+\sdepth_{{\hat S}''}L_{\hat \tau}$.

On the other hand, ${\mathcal H}=\{P_7\cap S'', P_5\cap S'',P_3\cap S'',P_1\cap S''\}$ is a maximal admissible family of $L_{\tau}$ and we have $\bigsize({\mathcal H})=2=\bigsize({\mathcal G}_1)$. Also note that ${\mathcal P}=\{P_7\cap {\tilde S}'',P_5\cap {\tilde S}'',P_3\cap {\tilde S}''\}$ is a maximal admissible family of $L_{\tilde \tau}$ and $\bigsize({\mathcal P})=2>1=\bigsize({\mathcal H}_1)$. Finally, ${\mathcal E}=\{P_7\cap {\hat S}'',P_5\cap {\hat S}''\}$ is a maximal admissible family of $L_{\hat \tau}$ and $\bigsize({\mathcal E})=1=\bigsize({\mathcal P}_1)$. Therefore, we have  $\sdepth_SI>1+\bigsize({\mathcal G})$,
 $\sdepth_{S''}L_{\tau}>1+\bigsize({\mathcal H})$, $\sdepth_{{\tilde S}''}L_{\tilde \tau}=1+\bigsize({\mathcal P})$ and  $\sdepth_{{\hat S}''}L_{\hat \tau}=1+\bigsize({\mathcal E})$.
\end{Example}

\section{Bigsize and Stanley depth}

 Let $I\subset S$ be a monomial ideal and  $I=\cap_{i\in [s]}Q_i$ an irredundant   decomposition  of $I$  as an  intersection of irreducible monomial ideals, $P_i=\sqrt{Q_i}$. Let $G(I) $ be the minimal set of monomial generators of $I$. Assume that  $\sum_{P\in \Ass_S S/I} P=\mm$.  Given $j\in [n]$ let $\deg_j I$ be the  maximum degree of $x_j$ in all monomials of  $G(I)$.

\begin{Lemma}\label{l}
Suppose that   $c:=\deg_nI>1$,  let us say $c=\deg_nQ_j$ if and only if  $j\in [e]$ for some $e\in [s]$.
 Assume that  $Q_j=(J_j,x_n^c)$ for some irreducible ideal $J_j\subset S_n=K[x_1,\ldots,x_{n-1}]$, $j\in [e]$. Let $Q'_j=(J_j,x_n^{c-1})\subset S$, $Q''_j=(J_j,x_{n+1})\subset {\tilde S}=S[x_{n+1}]$ and set
 $${\tilde I}=(\cap_{i=e+1}^sQ_i{\tilde S})\cap (\cap_{i\in [e]}Q'_i{\tilde S})\cap (\cap_{i=s+1}^ {s+e}Q_i)\subset {\tilde S},$$
  where $Q_i=Q''_{i-s}$ for $i>s$, the decomposition of ${\tilde I}$ being not necessarily irredundant. Then $\sdepth_{{\tilde S}}{\tilde I}\leq \sdepth_SI+1$ and $\sdepth_{\tilde S}{\tilde S}/{\tilde I}\leq \sdepth_SS/I+1$.
  \end{Lemma}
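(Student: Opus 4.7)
My plan is to recognize $\tilde I$ as exactly one step of polarization of $I$ along the variable $x_n$, and then transfer Stanley decompositions via the specialization map $x_{n+1}\mapsto x_n$. First, the direct computation
\[
Q'_j\tilde S \cap Q''_j = (J_j, x_n^{c-1}) \cap (J_j, x_{n+1}) = (J_j, x_n^{c-1}x_{n+1}),\qquad j\in[e],
\]
together with the observation that $Q_i\tilde S$ for $i>e$ is unchanged, gives
\[
\tilde I = \bigcap_{i>e} Q_i\tilde S \,\cap\, \bigcap_{j \in [e]}(J_j, x_n^{c-1}x_{n+1}).
\]
So $\tilde I$ arises from $I = \cap_i Q_i$ by replacing the generator $x_n^c$ of each $Q_j$ ($j\in[e]$) with $x_n^{c-1}x_{n+1}$, a single polarization step at $x_n$ in the sense of \cite[Theorem~4.5]{IKM}.

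Next I verify that $\phi : \tilde S \to S$ defined by $\phi(x_{n+1})=x_n$ and $\phi(x_i)=x_i$ for $i\leq n$ satisfies $\phi(\tilde I) = I$. The inclusion $\phi(\tilde I) \subseteq I$ is immediate from the formula above. For the reverse, given a monomial $u = u'x_n^a \in I$ with $\gcd(u',x_n)=1$: if $u' \in \bigcap_{j\in[e]}J_j$ then $u$ itself already lies in $\tilde I$; otherwise necessarily $a\geq c$, and $\tilde u := u'x_n^{a-1}x_{n+1}$ lies in $\tilde I$ (each component checked using $a-1\geq c-1$) with $\phi(\tilde u)=u$.

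The main step is the sdepth transfer. Given a Stanley decomposition $\tilde I = \bigoplus_i u_iK[Z_i]$ of sdepth $d$, I will construct a Stanley decomposition of $I$ of sdepth at least $d-1$ by refining and projecting each piece: if $x_n, x_{n+1}\in Z_i$, refine as $\bigoplus_{l\geq 0}u_ix_{n+1}^lK[Z_i\setminus\{x_{n+1}\}]$ and keep only the $l=0$ summand, giving dimension $|Z_i|-1$; if $x_{n+1}\in Z_i$ and $x_n\notin Z_i$, relabel $x_{n+1}$ as $x_n$ in $Z_i$, preserving dimension; if $x_{n+1}\notin Z_i$, apply $\phi$ to $u_i$ and retain $Z_i$, preserving dimension. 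The construction for $\tilde S/\tilde I$ and $S/I$ is parallel, working with representative monomials outside $\tilde I$ rather than inside.

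The hard part will be verifying that the resulting collection is an honest direct-sum decomposition: each monomial of $I$ (respectively of $S\setminus I$) must be covered exactly once by the retained pieces. This reduces to a case-by-case check of the fibres of $\phi$ over a given monomial, using $\phi(\tilde I)=I$ established above and the fact that the retained sub-pieces select a canonical representative (the one with $x_{n+1}$-exponent zero in the ``free'' slot) in each fibre. This is essentially the content of \cite[Theorem~4.5]{IKM} specialized to the present one-step polarization.
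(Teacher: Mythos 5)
Your preliminary steps are fine: the computation $Q'_j\tilde S\cap Q''_j=(J_j,x_n^{c-1}x_{n+1})$ is correct, and so is the verification that $\phi\colon x_{n+1}\mapsto x_n$ carries the monomials of $\tilde I$ onto those of $I$. The gap is in the ``main step'', which is where the lemma actually lives: the retain-and-project recipe does not produce a direct sum, and no fibrewise check will rescue it, because distinct retained monomials in the same fibre of $\phi$ really do occur. Concretely, take $n=2$, $I=Q_1=(x_1,x_2^2)$, so $J_1=(x_1)$, $\tilde I=(x_1,x_2)\cap(x_1,x_3)=(x_1,x_2x_3)\subset K[x_1,x_2,x_3]$ with $x_n=x_2$, $x_{n+1}=x_3$, and take the Stanley decomposition
$$\tilde I=x_1K[x_1,x_3]\oplus x_1x_2K[x_1,x_2,x_3]\oplus x_2x_3K[x_2,x_3]$$
of sdepth $2$. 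Your recipe sends the first piece (your second case: $x_3\in Z$, $x_2\notin Z$) to $x_1K[x_1,x_2]$ and the second piece (your first case, keep $l=0$) to $x_1x_2K[x_1,x_2]$; these overlap. Equivalently, the fibre of $x_1x_2$ is $\{x_1x_2,\,x_1x_3\}$ and \emph{both} elements are retained, one from each of those two pieces. So the output covers $I$ but is not a Stanley decomposition, and discarding or refining pieces to restore disjointness loses control of the minimal dimension.

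This failure is precisely why the paper does not project a Stanley decomposition at all. Its proof checks that $\phi$ induces a surjective join-preserving map of lcm-lattices $L_{\tilde I}\to L_I$ and then invokes \cite[Theorem 4.5]{IKM} as a black box; the content of that theorem is lattice-theoretic and is not a naive fibrewise selection. If you intend to rest on \cite[Theorem 4.5]{IKM} (as your last sentence suggests), then what must be verified is the lattice condition, not the directness of your construction: namely that $\phi$ maps $G(\tilde I)$ onto $G(I)$ (replacing $x_n^{c-1}x_{n+1}$ by $x_n^c$) and that $\phi(\lcm(u,v))=\lcm(\phi(u),\phi(v))$ for $u,v\in G(\tilde I)$ — which holds here because every generator of $\tilde I$ involving $x_{n+1}$ has $x_{n+1}$-degree $1$ and $x_n$-degree exactly $c-1$, the maximum possible. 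Your steps establishing $\tilde I$ as a one-step polarization are exactly the groundwork for that verification, so the fix is to replace your ``main step'' by this lattice check plus the citation.
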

\begin{proof}  Let $L_I$, $L_{\tilde I}$ be the LCM-lattices associated to $I,{\tilde I}$. The map ${\tilde S}\to S$ given by $x_{n+1}\to x_n$ induces a surjective join-preserving map
$L_{\tilde I}\to L_I$ and by \cite[Theorem 4.5]{IKM}  we get $\sdepth_{\tilde S}{\tilde I}\leq \sdepth_SI+1$ and  $\sdepth_{{\tilde S}}{\tilde S}/{\tilde I}\leq \sdepth_SS/I+1$.
 \hfill\ \end{proof}

 With the notations and assumptions  of Lemma \ref{l} let
$${\tilde C}=\{i\in [s]:P_i{\tilde S}\in \Ass_{\tilde S}{\tilde S}/{\tilde I}\}\cup ([s+e]\setminus [s]).$$
 Choose
 a total admissible order $\tilde \nu$ on ${\tilde C}$  and  a total admissible order $\nu$ on $[s]$ extending the restriction  of ${\tilde \nu}$ to $[s]\cap {\tilde C}$. Let $\tilde{\mathcal F}=({\tilde Q}_{i_k})_{k\in [t]}$ be a  family of $\tilde I$ with respect to $\tilde \nu$. Replace in $\tilde{\mathcal F}$ the ideals ${\tilde Q}_{i_k}$ by $Q_{i_k}={\tilde Q}_{i_k}\cap S$ when $P_{i_k}$ is maximal in $\Ass_SS/I$ and $\tilde{Q}_{i_k}$ is not of the form $Q'_i{\tilde S}$ or $Q''_i$ for some $i\in [e]$. When ${\tilde Q}_{i_k}$ is of the form
$Q'_i{\tilde S}$ or $Q''_i$ for some $i\in [e]$ then replace in $\tilde{\mathcal F}$ the ideal  ${\tilde Q}_{i_k}$ by $Q_i$. If $P_{i_k}$ is not maximal in $\Ass_SS/I$ then ${\tilde Q}_{i_k}\subset Q'_i{\tilde S}$ for some $i\in [e]$ and we replace  in $\tilde{\mathcal F}$ the ideal ${\tilde Q}_{i_k}$ by $Q_i$ (this $i$ is not unique and we have to choose a possible one). Note that $x_n\in P_{i_k}$ because otherwise $Q_{i_k}\subset Q_i$ which is impossible.

In this way, we get a family $\overline{\mathcal F}$ of ideals which are maximal in $\Ass_SS/I$. Sometimes $\overline{\mathcal F}$ contains  the same ideal $Q_i$, $i\in [e]$ several times.  Keeping such $Q_i$ in  $\overline{\mathcal F}$ only the first time when it appears and removing the others we get a family ${\mathcal F}$ of $I$ with respect to $\nu$.

\begin{Lemma}\label{l4} There exists a family $\mathcal G$ of $I$ with respect to $\nu$ such that   $\bigsize({\tilde{\mathcal F}})\geq 1+\bigsize({\mathcal G})$.
  \end{Lemma}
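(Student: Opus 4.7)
I would argue by induction on $t = t_{\tilde{\mathcal{F}}}$, exploiting the recursive definition of $\bigsize$ in Definition \ref{d}. The family $\mathcal{G}$ will be built from the replacement procedure described in the paragraph preceding the lemma, possibly sub-sampled at one of the recursive branches.

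For the base case $t = 1$, the family $\tilde{\mathcal{F}} = (\tilde{Q}_{i_1})$ consists of a single ideal; take $\mathcal{G}$ to be the singleton family made from the corresponding $Q_i$ in $S$ under the replacement procedure. A direct dimension check in each of the three possible types for $\tilde{Q}_{i_1}$ yields $\dim \tilde{S}/\sqrt{\tilde{Q}_{i_1}} = 1 + \dim S/\sqrt{Q_i}$: for $\tilde{Q}_{i_1} = Q_j \tilde{S}$ (with $j > e$) or $\tilde{Q}_{i_1} = Q'_i \tilde{S}$ (with $i \in [e]$), the radical in $\tilde{S}$ simply acquires the free variable $x_{n+1}$; while for $\tilde{Q}_{i_1} = Q''_i$ the radical trades $x_n$ for $x_{n+1}$, but since $J_i \subset K[x_1,\ldots,x_{n-1}]$ the height of $\sqrt{J_i}$ is unchanged and the extra variable $x_{n+1}$ still contributes a $+1$ to the dimension. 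Hence $\bigsize(\tilde{\mathcal{F}}) = 1 + \bigsize(\mathcal{G})$ on the nose.

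For $t > 1$ I would split according to which branch of $\bigsize(\tilde{\mathcal{F}}) = \min\{\bigsize(\tilde{\mathcal{F}}'), 1 + \bigsize(\tilde{\mathcal{F}}_1)\}$ attains the minimum. If it is $\bigsize(\tilde{\mathcal{F}}')$, the induction hypothesis applied to the shorter family $\tilde{\mathcal{F}}'$ of length $t-1$ yields a family $\mathcal{G}$ of $I$ with $\bigsize(\tilde{\mathcal{F}}') \geq 1 + \bigsize(\mathcal{G})$, and the same $\mathcal{G}$ works for $\tilde{\mathcal{F}}$. If instead the minimum is $1 + \bigsize(\tilde{\mathcal{F}}_1)$, I would apply the same replacement procedure to the family $\tilde{\mathcal{F}}_1$ of $\tilde{I}_1 = \cap_{k<t}(\tilde{Q}_{i_t} + \tilde{Q}_{i_k})$, producing an analogous family $\mathcal{F}_1$ of the corresponding ideal $I_1$ of $S$. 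The essential observation is that in $\tilde{I}_1$ the sums $\tilde{Q}_{i_t} + \tilde{Q}_{i_k}$ can already involve both $x_n$ (inherited from $\tilde{Q}_{i_t}$ when it is a $Q'$- or $Q_j \tilde{S}$-type with $x_n$ in the radical) and $x_{n+1}$ (when some $\tilde{Q}_{i_k}$ is of $Q''$-type), so the dimension drop from $\dim \tilde{S}/\sum \tilde{P}$ to $\dim S/\sum P^{\mathrm{img}}$ is absorbed at this inner level rather than producing a further $+1$. Invoking the inductive hypothesis (or Remark \ref{r4}) on $\tilde{\mathcal{F}}_1$ then gives $\bigsize(\tilde{\mathcal{F}}_1) \geq \bigsize(\mathcal{F}_1)$, and appending $Q_{i_t}^{\mathrm{img}}$ to a family extracted from $\mathcal{F}_1$ produces the desired $\mathcal{G}$ whose recursive $\bigsize$ mirrors that of $\tilde{\mathcal{F}}$.

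The main technical obstacle I foresee is in the second sub-case: the collapse map identifying $x_{n+1}$ with $x_n$ can merge distinct $\tilde{Q}_{i_k}$ into a single $Q_i$ (whenever both $Q'_i \tilde{S}$ and $Q''_i$ appear in $\tilde{\mathcal{F}}$), and whether this merging occurs changes the gap $\dim \tilde{S}/\sum \tilde{P} - \dim S/\sum P^{\mathrm{img}}$ from $1$ to $0$. Consistent bookkeeping under the non-canonical choice of $i \in [e]$ for non-maximal $P_{i_k}$, while simultaneously preserving admissibility of the order $\nu$ on the resulting shortened and permuted index set, is the delicate step. It may well be cleaner to verify the inequality through Remark \ref{r4}'s characterization $\bigsize = r - 1 + \dim S/\sum_{k_j} P$, selecting $\mathcal{G}$ to correspond to the optimizing sub-collection of $\tilde{\mathcal{F}}$ and showing that either the sub-collection loses a member under deduplication (saving the $+1$ via $r - r' \geq 1$) or retains its cardinality (saving the $+1$ via the extra free variable $x_{n+1}$).
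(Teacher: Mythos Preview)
Your base case and your Case~A (when the minimum in $\bigsize(\tilde{\mathcal F})=\min\{\bigsize(\tilde{\mathcal F}'),1+\bigsize(\tilde{\mathcal F}_1)\}$ is attained by $\bigsize(\tilde{\mathcal F}')$) are correct and match the paper. The genuine gap is in Case~B. The family $\tilde{\mathcal F}_1$ is a family of $\tilde I_1=\bigcap_{k<t}(\tilde Q_{i_t}+\tilde Q_{i_k})$, not of $\tilde I$, so your induction hypothesis---which is formulated for the specific pair $(\tilde I,I)$ produced by one step of Lemma~\ref{l}---does not apply to it; the pair $(\tilde I_1,I_1)$ is not of that shape. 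Even granting an inequality $\bigsize(\tilde{\mathcal F}_1)\geq \bigsize(\mathcal F_1)$, you cannot manufacture the desired $\mathcal G$ by ``appending $Q_{i_t}^{\mathrm{img}}$'': the members of $\mathcal F_1$ are sums $Q_{i_t}^{\mathrm{img}}+Q_{i_k}^{\mathrm{img}}$, not irreducible components of $I$, so the result is not a family of $I$ in the sense of Definition~\ref{d}, and its bigsize does not bound $\bigsize(\mathcal G)$ in the required direction.

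You essentially diagnose this yourself in your last paragraph, and the alternative you sketch there is exactly what the paper does. The paper invokes Remark~\ref{r4} to write $\bigsize(\tilde{\mathcal F})=r-1+\dim\tilde S/\sum_{j\in[r]}\tilde P_{i_{k_j}}$ for some $k_1<\dots<k_r$ in $[t]$. If $r<t$ it sets $\tilde{\mathcal G}=(\tilde Q_{i_{k_j}})_{j\in[r]}$, which \emph{is} a family of $\tilde I$, observes $\bigsize(\tilde{\mathcal G})\leq r-1+\dim\tilde S/a_{\tilde{\mathcal G}}=\bigsize(\tilde{\mathcal F})$, and applies the induction hypothesis to $\tilde{\mathcal G}$. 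If $r=t$ it uses the direct estimate $\bigsize(\mathcal F)\leq |E|-1+\dim S/a_{\mathcal F}$ (where $\mathcal F=(Q_{i_k})_{k\in E}$ is the deduplicated image family) together with $\dim\tilde S/a_{\tilde{\mathcal F}}\geq 1+\dim S/a_{\mathcal F}$, the latter following from the variable count you carried out in the base case; deduplication ($|E|<t$) only helps. So discard the recursive-min route for Case~B and execute your final paragraph; that is the complete argument.
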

\begin{proof}
Apply induction on $t$. Assume that $t=1$. Then note that $\bigsize({\tilde{\mathcal F}})=\dim {\tilde S}/{\tilde P}_{i_1}=1+\dim  S/ P_{i_1}=1+\bigsize({\mathcal F})$
 when $P_{i_1}$ is maximal in $\Ass_SS/I$ and $\tilde{Q}_{i_1}$ is not of the form $Q'_i{\tilde S}$ or $Q''_i$ for some $i\in [e]$. If $\tilde{Q}_{i_1}=Q'_i{\tilde S}$ for some $i\in [e]$ then $\bigsize({\tilde{\mathcal F}})=\dim {\tilde S}/ P_i{\tilde S}=1+\dim  S/ P_i=1+\bigsize({\mathcal F})$. Similarly, it happens when  $\tilde{Q}_{i_1}=Q''_i$ because then $\dim {\tilde S}/{\tilde P}_{e+i}=\dim S/J_i=1+\dim S/P_i$.  If  $\tilde{Q}_{i_1}=Q_l{\tilde S}$ for some $l\in [s]$ such that $Q_l\subset Q'_i$ for some $i\in [e]$ and $P_l$ is not maximal in $\Ass_SS/I$ then  note that $\dim {\tilde S}/P_l{\tilde S}=1+\dim S/P_l>1+\dim S/P_i$.

 Let $t>1$. Assume that  $\bigsize(\tilde { \mathcal F})=t-1+\dim {\tilde S}/a_{\tilde{\mathcal F}}$. As above we see that $\dim {\tilde S}/a_{\tilde{\mathcal F}}\geq 1+\dim S/a_{\mathcal F}$.
Let $\overline{\mathcal F}=(Q_{i_k})_{k\in [t]}$. If $\overline{\mathcal F}=\mathcal F$ then we get $\bigsize( { \mathcal F})\leq t-1+\dim S/a_{\mathcal F}\leq \bigsize(\tilde { \mathcal F})-1$. Otherwise,
assume that ${\mathcal F}=(Q_{i_k})_{k\in E}$ for some $E\subsetneq [t]$. We have  $\bigsize( { \mathcal F})\leq |E|-1+\dim S/a_{\mathcal F}<t-1+\dim S/a_{\mathcal F}\leq \bigsize(\tilde { \mathcal F})-1$. Then take ${\mathcal G}={\mathcal F}$.

Now assume that  $\bigsize(\tilde{\mathcal F})=r-1+\dim {\tilde S}/\sum_{j\in [r]} {\tilde P}_{i_{k_j}}$ for some $r\in [t-1]$ and $k_1<\ldots<k_r$ from $[t]$ (see Remark \ref{r4}). Set $\tilde{\mathcal G}=({\tilde Q}_{i_{k_j}})_{j\in [r]}$. We have $\bigsize(\tilde{\mathcal G})\leq r-1+\dim {\tilde S}/a_{\tilde{\mathcal G}}=\bigsize(\tilde{\mathcal F})$. Consider the families $\overline{\mathcal G}$, ${\mathcal G}$ corresponding to $\tilde{\mathcal G}$ similarly to  $\overline{\mathcal F}$, ${\mathcal F}$ corresponding to $\tilde{\mathcal F}$. By induction hypothesis ($r<t$) we have $\bigsize( \tilde{\mathcal G})\geq 1+\bigsize({\mathcal G})$.  Then
 $$\bigsize(\tilde{\mathcal F})\geq \bigsize(\tilde{\mathcal G})\geq 1+\bigsize({\mathcal G}).$$
    \hfill\ \end{proof}
 \begin{Example} \label{e2}
 Let $n=4$, $Q_1=(x_1,x_2)$, $Q_2=(x_1,x_3)$, $Q_3=(x_1^2,x_2,x_3)$, $Q_4=(x_1^2,x_3,x_4)$ and $I=\cap_{i\in [4]}Q_i$. Let ${\mathcal F}=\{Q_3,Q_4\}$. Then $\size_S(I)=1$ because $P_3+P_4=\mm$. Also note that $\bigsize({\mathcal F}')= \min\{1,1+0\}=1$, $\bigsize({\mathcal F}_1)=0$  and so $\bigsize({\mathcal F})= \min\{1,1+0\}=1$.

  Clearly,
  ${\tilde I}=Q_1{\tilde S}\cap Q_2{\tilde S}\cap Q''_3\cap Q''_4$. Now $P_1{\tilde S},P_2{\tilde S}$ are maximal in $\Ass_{\tilde S}{\tilde S}/{\tilde I}$. For ${\mathcal G}=\{Q_1{\tilde S},Q_2{\tilde S},Q''_3,Q''_4\}$ we get $\bigsize({\mathcal G}')=\min\{\min\{3,1+2\}, 1+1\}=2$, $\bigsize({\mathcal G}_1)=\min\{1,1+0\}=1$ and so    $\bigsize({\mathcal G})=\min\{2,1+1\}=2$.  If we take ${\mathcal H}=\{Q''_3,Q''_4,Q_1\}$ then   $\bigsize({\mathcal H}')=\min\{2,1+1\}=2$, $\bigsize({\mathcal H}_1)=1$
    and so  $\bigsize({\mathcal H})=2$. Thus
   $\bigsize({\mathcal G})=\bigsize({\mathcal H})=1+\bigsize({\mathcal F})$.
 \end{Example}

 \begin{Example} \label{e3} Let $n=4$, $Q_1=(x_1,x_2)$, $Q_2=(x_1^2,x_3)$, $Q_3=(x_1^2,x_4)$ and $I=\cap_{i\in [3]}Q_i$. Let ${\mathcal F}=\{Q_1,Q_2,Q_3\}$. Then we see that $\bigsize({\mathcal F})=2=\size I$.
 Clearly,  ${\tilde I}=Q_1{\tilde S}\cap Q'_2{\tilde S}\cap Q'_3{\tilde S}\cap Q''_2\cap Q''_3$, where
$Q'_2= (x_1,x_3)$, $Q'_3= (x_1,x_4)$, $Q''_2= (x_3,x_5)$, $Q''_3= (x_4,x_5)$. Then $\{Q_1{\tilde S},Q''_2,Q''_3\}$, $\{Q'_2{\tilde S},Q_1{\tilde S},Q''_3\}$, $\{Q'_3{\tilde S},Q_1{\tilde S},Q''_2\}$,
$\{Q''_2, Q_1{\tilde S},Q''_3\}$, $\{Q''_3,Q_1{\tilde S},Q'_2{\tilde S}\}$ are maximal admissible families of ${\tilde I}$ but with respect to some total orders which are not admissible.   However, $\mathcal G=\{Q''_2,Q'_2{\tilde S},Q_1{\tilde S},\\
Q'_3{\tilde S}\} $ is a maximal admissible family of ${\tilde I}$ with respect to an admissible order. Note that
$\bigsize({\mathcal G}')=\min\{3,1+2\}=3$ and ${\mathcal G}_1=\{(x_1,x_3,x_4), (x_1,x_2,x_4)\}$ has bigsize $2$. Thus $\bigsize({\mathcal G})=\min\{3,1+2\}=3=1+
\bigsize({\mathcal F})$. We see that $\size_{\tilde S}{\tilde I}=2$ because $Q_1+Q''_2+Q'_3={\tilde \mm}$.
\end{Example}

 \begin{Theorem} \label{t} Let $I$ be a monomial ideal of $S$ and  $I=\cap_{i\in [s]}Q_i$  an irredundant   decomposition  of $I$  as an intersection of irreducible monomial ideals, $P_i=\sqrt{Q_i}$. Then $\sdepth_SI\geq \size_S I+1$.
\end{Theorem}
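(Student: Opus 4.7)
The plan is to deduce the theorem from Theorem~\ref{b} by polarizing $I$ step by step to a squarefree ideal and bridging the $\sdepth$ estimates via the bigsize, which (unlike size itself) behaves well under polarization by Lemma~\ref{l4}. First, using \cite[Lemma 3.6]{HVZ}, I reduce to the case $\sum_{i\in[s]} P_i = \mm$.

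Next, I iterate Lemma~\ref{l}: whenever some variable (say $x_n$ after relabeling) appears with maximum degree $c>1$ in $G(I)$, polarize one power of $x_n$ by introducing a new variable $x_{n+1}$. After $k := \sum_{j=1}^{n}(\deg_j I - 1)$ such steps we arrive at a squarefree monomial ideal $I^p \subset S^p = K[x_1,\ldots,x_{n+k}]$. The assumption $\sum P_i = \mm$ persists at every step: the new ideal $Q''_j = (J_j, x_{n+1})$ has $x_{n+1}$ in its radical, while $\sqrt{Q'_i}$ still contains $x_n$ as long as $c-1\ge 1$. Iterating the inequality in Lemma~\ref{l} yields
\[
\sdepth_S I \;\geq\; \sdepth_{S^p} I^p - k.
\]
Since $I^p$ is squarefree, applying Theorem~\ref{b} to an irredundant prime decomposition of $I^p$ gives $\sdepth_{S^p} I^p \geq 1 + \bigsize_{S^p}(I^p)$.

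To translate $\bigsize_{S^p}(I^p)$ back to an invariant of $I$, I pick a total admissible order on the associated primes of $I^p$ together with the unique maximal admissible family $\mathcal F^p$ it determines (Remark~\ref{r'}), chosen so that $\bigsize(\mathcal F^p) = \bigsize_{S^p}(I^p)$. Iterating Lemma~\ref{l4} down through the $k$ polarization steps, and choosing at each stage the compatible admissible order on the corresponding subset of associated primes as in the construction preceding Lemma~\ref{l4}, produces a family $\mathcal F$ of $I$ with respect to some total admissible order $\nu$ on $[s]$ satisfying $\bigsize(\mathcal F^p) \geq k + \bigsize(\mathcal F)$. Combined with $\bigsize(\mathcal F) \geq \size_S I$ from Lemma~\ref{l0}, chaining the inequalities gives
\[
\sdepth_S I \;\geq\; \sdepth_{S^p} I^p - k \;\geq\; 1 + \bigsize_{S^p}(I^p) - k \;\geq\; 1 + \bigsize(\mathcal F) \;\geq\; 1 + \size_S I,
\]
which is the desired bound.

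The main technical obstacle is the coordinated bookkeeping of admissible orders across the tower of polarization steps: one must ensure that at every intermediate level Lemma~\ref{l4} is applied to an honest family with respect to a total admissible order on the primes of the current ideal, and that the hypothesis $\sum P_i = \mm$ required by Lemma~\ref{l} continues to hold after each polarization. Once the setup of Section~2 is correctly threaded through the recursion, the conclusion falls out from the telescoping of the two $k$-fold chains, the gain of $+1$ per step in bigsize exactly compensating the possible loss of $-1$ per step in $\sdepth$.
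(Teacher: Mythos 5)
Your proposal is correct and follows essentially the same route as the paper's own proof: reduce to $\sum P_i=\mm$, iterate Lemma \ref{l} to reach the squarefree polarization, apply Theorem \ref{b} there, descend the bigsize through the polarization tower via Lemma \ref{l4}, and finish with Lemma \ref{l0}. The only cosmetic difference is that you make the number of polarization steps explicit as $\sum_j(\deg_j I-1)$, whereas the paper simply takes some $r$ with $I^{(r)}$ squarefree.
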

\begin{proof}  Using \cite[Lemma 3.6]{HVZ} we may reduce to the case when $\sum_{P\in \Ass_S S/I} P=\mm$.
If  $I$ is squarefree then apply Theorem \ref{hpv}, or Theorem \ref{b} with Lemma \ref{l0}. Otherwise, assume that  $c=\deg_nI>1$.  By Lemma \ref{l} there exist $e$ and a monomial ideal ${\tilde I}$ such that  $\sdepth_{{\tilde S}}{\tilde I}\leq\sdepth_SI+1$. Set $I^{(1)}={\tilde I}$ and $S^{(1)}={\tilde S}$. If $I^{(1)}$ is not squarefree then apply again Lemma \ref{l} for some $x_i$ with $\deg_i I^{(1)}>1$. We get  $I^{(2)}=(I^{(1)})^{(1)}$,  $S^{(2)}=(S^{(1)})^{(1)}$ such that $ S^{(2)}=S[x_{n+1},x_{n+2}]$,
   $\sdepth_{ S^{(2)}} I^{(2)}\leq\sdepth_SI+2$.
  Applying Lemma \ref{l} by recurrence we get some monomial ideals $I^{(j)} \subset S^{(j)}$, $j\in [r]$ for some $r$  such that  $S^{(j)}=S[x_{n+1},\ldots,x_{n+j}]$,
  $\sdepth_{ S^{(j)}} I^{(j)}\leq\sdepth_SI+j$ and $I^{(r)}$ is a squarefree monomial ideal (thus $I^{(r)}$ is the polarization of $I$).

Now, let ${\mathcal F}^{(r)}$ be a maximal admissible family of $I^{(r)}$
 with respect to some total admissible order $\nu_r$ such that $\bigsize_{S^{(r)}}(I^{(r)})=\bigsize_{\nu_r}(I^{(r)})=\bigsize({\mathcal F}^{(r)})$.
By Theorem \ref{b} we have  $\sdepth_{S^{(r)}}I^{(r)}\geq 1+ \bigsize({\mathcal F}^{(r)})$.

  Using Lemma \ref{l4} there exists a family ${\mathcal F}^{(r-1)}$  of $I^{(r-1)}$ such that    $1+\bigsize({\mathcal F}^{(r-1)})\leq \bigsize({\mathcal F}^{(r)})$. Applying again Lemma \ref{l4} by recurrence we find a family ${\mathcal F}$  of $I$ such that  $r+\bigsize({\mathcal F})\leq  \bigsize({\mathcal F}^{(r)})$. Thus
$$\sdepth_SI\geq \sdepth_{ S^{(r)}} I^{(r)}-r\geq  $$
$$\bigsize({\mathcal F}^{(r)})-r+1\geq 1+\bigsize({\mathcal F}).$$
 Applying Lemma \ref{l0} we are done.
 \hfill\ \end{proof}
\begin{Remark} Let $n=4$,  $P_1=(x_1,x_2)$, $P_2= (x_1^2,x_3^2)$, $P_3=(x_2,x_4)$, $P_4=(x_3,x_4)$, and $J=\cap_{i\in [4]}P_i$. Note that the polarization of $J$ is the ideal $I$ from Examples \ref{e'}, \ref{e4}, \ref{e5}.
\end{Remark}

\end{document}